\newtheorem{thm}{Theorem}[section]
\newtheorem{lem}[thm]{Lemma}
\theoremstyle{definition}
\newtheorem{define}[thm]{Definition}
\theoremstyle{remark}
\numberwithin{equation}{section}
\newcommand{\bbR}{\mathbb R}
\newcommand{\bbC}{\mathbb C}
\newcommand{\bbK}{\mathbb K}
\begin{document}

\title{Normalization of rationally integrable systems}

\author{Nguyen Tien Zung}
\address{Institut de Mathématiques de Toulouse and Torus Actions SAS}
\email{ntzung@torus.ai}

\date{This version : January 2025}
\subjclass{37G05,70K45,34C14,70GXX}

\keywords{Local normal form, torus action, rational 
integrability, commuting vector fields}%

\begin{abstract}
It's well-known that any analytic vector field near a singular point admits a normalization
à la Poincaré-Birkhoff, but this normalization is only formal in general, and the problem of
analytic (convergent) normalization is a difficult one. In \cite{Zung_Birkhoff2005,Zung_Poincare2002} we proposed a new approach to the normalization of 
vector fields, via their intrinsic associated torus actions: an analytic vector field is analytically normalizable near a singular point if and only if its associated torus action is analytic (and not just formal). We then showed that if a vector field is analytically integrable then its associated torus action is analytic, therefore the vector field is analytically normalizable \cite{Zung_Birkhoff2005,Zung_Poincare2002}. 
In this paper, we extend this analytic normalization result to the case of \emph{rationally integrable} systems, where the first integrals and commuting vector fields are not required to be analytic, but just rational (i.e., quotients of analytic functions or vector fields by analytic functions). For example, any vector field of the type $X = fY$, where $Y$ is an analytically diagonalizable vector field and $f$ is an analytic function such that $Y(f) = 0$, is rationally integrable
but not necessarily analytically integrable. 

\bigskip

\emph{Dedicated to Anatoly Timofeevich Fomenko on the occasion of his 80th birthday.} 
\end{abstract}
\maketitle


\section{Introduction}


In order to state precisely the main result of this paper, let us first briefly recall the theory of local normalization of analytic vector fields (see, e.g., \cite{Bruno_Book1989,Roussarie,SiegelMoser1971}) from the point of view of torus actions \cite{Zung_Birkhoff2005,Zung_Poincare2002}. 
Consider a local analytic or formal vector field $X$ on $(\mathbb{K}^n, 0)$, where
$\mathbb{K}$ is $\mathbb{R}$ or $\mathbb{C}$. Then $X$ may be viewed as a linear operator $f \mapsto X(f)$ on the space of formal
functions on $(\mathbb{K}^n, 0)$. Similarly to the Jordan decomposition of every finite-dimensional square matrix into the sum of its semisimple part and its nilpotent part, (by projective limit) $X$ also admits a unique decomposition
\begin{equation}
X = X^S + X^N
\end{equation}
into the sum of its semisimple part $X^S$ and its nilpotent part
$X^N$. Here $X^S$ and $X^N$ are formal vector fields, both of them commute with $X$ and with each other, and $X^S$ is diagonalizable 
in a (complexified if $\mathbb{K} = \mathbb{R}$) formal coordinate system $(x_1,\hdots, x_n)$: 
\begin{equation}
X^S = \sum_{i=1}^n \lambda_i  x_i \dfrac{\partial}{\partial x_i},
\end{equation}
i.e. $X^S(x_i) = \lambda_i x_i$, and $X^S(\prod_i x_i^{a_i}) =
(\sum_i a_i \lambda_i) \prod_i x_i^{a_i}$ for the other monomial functions in this coordinate system. If we write the Taylor expansion of $X$ in this coordinate system $(x_1,\hdots, x_n)$ as
\begin{equation}
X = X^{s} + X^{n} + X^{(2)} + X^{(3)} + \hdots,
\end{equation}
where $X^{s}$ and  $X^{n}$ are the semisimple part and the nilpotent part
of the linear vector field $X^{(1)} = X^{s} + X^{n}$ in its Jordan decomposition, and $X^{(k)}$ is the term of degree $k$ of $X$, then we have that $X^S = X^{s}$ and $X^N = X^{n} + X^{(2)} + X^{(3)} + \hdots$. The equation $[X^S,X^N] = 0$ means that  
$[X^{s},X^{(k)}] = 0$ for every $k \geq 2$, i.e., all the non-linear terms of $X$ in the coordinate system $(x_1,\hdots, x_n)$
are \textit{\textbf{resonant}} with respect to the semisimple part of $X$. 

The above coordinate system $(x_1,\hdots, x_n)$, in which every non-linear term of $X$ is resonant, is called a \textbf{\textit{Poincaré-Birkhoff normalization}} of $X$, and the expression of $X$ in such a coordinate system is called a 
\textbf{\textit{Poincaré-Birkhoff normal form}}. It is a classical result that 
a Poincaré-Birkhoff normalization
of a formal or local analytic vector field $X$ always exists, but is only formal in general, even when $X$ is analytic \cite{Bruno_Book1989,Roussarie,SiegelMoser1971}. 

The problem of existence of
local \textit{analytic} normalization of vector fields 
is a difficult one. There are two main approaches to this problem: the analytic
approach, which deals with small denominator phenomena, using analytic
estimates and fast convergence methods and Diophantine conditions, with important
results due to Poincaré, Siegel, Bruno and other people, see, e.g.,  \cite{Bruno_Book1989,Roussarie}. The second approach is more geometric, where one
tries to use symmetries and first integrals to arrive at the existence of a local
analytic normalization. There are several works in this second approach, including Bambusi--Cicogna--Gaeta--Marmo  
\cite{BaCiGaMa}, Bruno, Cicogna and Walcher 
\cite{BrWa,CiWa}, and Stolovitch \cite{Stolovitch-IHES,Stolovitch-Cartan}, which are somehow still quite analytical. 

In  \cite{Zung_Birkhoff2005,Zung_Poincare2002}  we developed a new geometric method of normalization, based on \textit{torus actions}.
Our starting point is the following observation: the semisimple part 
$X^S$ of $X$ can be written as
\begin{equation}
X^S = \sum_{i=1}^\tau \gamma_i Z_i,
\end{equation}
where $\gamma_1, \hdots, \gamma_\tau$ are incommensurable complex numbers,
$Z_1,\hdots,Z_\tau$ are (formal) linearly independent vector fields which generate a torus  $\mathbb{T}^\tau$-action (in the complexified space if $X$ is real). In a normalizing coordinate system, where $X^S$ is diagonalized, the vector fields $Z_i$ are also diagonal, with coefficients which are integer multiples of $\sqrt{-1}$. This torus action is intrinsic, i.e. uniquely determined by $X$ up to automorphisms of the torus, and is called the intrinsic \textbf{\textit{associated torus action}} of $X$, and the number $\tau$ is called the \textbf{\textit{toric degree}} of $X$ at $0$. The linearization of this intrinsic torus action is equivalent to the normalization of $X$, and so \textit{$X$ admits a local analytic normalization if and only if this intrinsic associated torus action is analytic}.
 We then used geometric approximation methods to show the analyticity of 
these associated torus actions, which leads to the existence of analytic normalization for analytically integrable vector fields \cite{Zung_Birkhoff2005,Zung_Poincare2002}, without any additional assumption. This is a significant improvement over
previous results of Russmann \cite{Russmann1964}, Vey \cite{Vey1978,Vey2}, Ito \cite{Ito_Birkhoff1989,Ito_Birkhoff1992}, Kappeler--Kodama--Nemethi \cite{KKN_Birkhoff1998}, among others.

The results of \cite{Zung_Birkhoff2005,Zung_Poincare2002}
are valid for all analytically integrable vector fields. However, there are many other vector fields, which behave very nicely
and which admit a local analytic normalization, but which are \emph{not} analytically integrable. For example, take a generalized
Euler vector field $X = \sum_{i=1}^n a_i x_i \dfrac{\partial}{\partial x_i}$, where $a_i > 0$ are positive numbers. According to Poincaré,
any analytic nonlinear perturbation of this vector field will admit
an analytic normalization, even though it does not admit any analytic first integral. Of particular interest is the class of rationally integrable vector fields, which are integrable in the non-Hamiltonian sense (see \cite{Bogoyavlenskij,Zung_Poincare2002}), but with rational instead of analytic first integrals and commuting vector fields:

\begin{define}
\label{define:DarbouxIntegrable} 
A local analytic vector field $X$ in $(\bbK^n,0)$, where
$\bbK = \bbR$ or $\bbC$, with $X(0) = 0$, is called \textit{\textbf{rationally integrable} }if it satisfies the following
properties:

i) There is a natural number $p$, $1 \leq p \leq n$
and $p$ local rational pairwise commuting linearly independent
vector fields  $X_1 = X, X_2, \hdots, X_p$
(the word rational means that
their coefficients are quotients of analytic functions):
$[X_i,X_j] =0\  \forall\ i,j.$

ii) There exist $q = n-p$ functionally independent rational
common first integrals $F_1,\hdots, F_q$ for the rational
vector fields $X_1,\hdots, X_p,$ i.e., $X_i(F_j) =0\  \forall\ i,j$.
\end{define}

In this paper, we extend the results of \cite{Zung_Birkhoff2005,Zung_Poincare2002} to rationally integrable systems:

\begin{thm} \label{thm:main}
Any local analytic vector field $X$ on $(\bbK^n,0)$, where $\bbK = \bbR$ or $\bbC$, with $X(0) = 0$, which is rationally integrable, admits a local analytic Poincaré-Birkhoff
normalization near $0$. 
\end{thm}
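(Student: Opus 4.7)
The plan is to follow the torus-action framework of \cite{Zung_Birkhoff2005, Zung_Poincare2002}: any formal vector field $X$ at a singular point carries an intrinsic associated formal torus $\bbT^\tau$-action $\rho$ generated by $Z_1,\dots,Z_\tau$, and $X$ admits an analytic Poincaré-Birkhoff normalization if and only if $\rho$ is analytic. The task therefore reduces to upgrading $\rho$ from formal to analytic under the Darboux-integrability hypothesis.

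First I would unpack the Darboux data into analytic building blocks. Each Darboux first integral $F_a = \prod_b G_{ab}^{c_{ab}}$ is built from analytic semi-invariants $G_{ab}$ satisfying $X(G_{ab}) = \mu_{ab}G_{ab}$ with $\mu_{ab}$ analytic, and analogously $X_i(G_{ab}) = \mu_{iab}G_{ab}$ with $\mu_{iab}$ a quotient of analytic functions; the relation $X_i(F_a)=0$ then takes the form $\sum_b c_{ab}\mu_{iab}=0$. Next I would verify two structural properties. (a)~Each $Z_k$ formally commutes with each $X_i$: the commutation $[X_i,X]=0$ places $X_i$ in $\ker\operatorname{ad}_{X^S}$, and because the $\gamma_k$ in $X^S=\sum_k\gamma_k Z_k$ are incommensurable, the formal weight decomposition yields $\ker\operatorname{ad}_{X^S}=\bigcap_k\ker\operatorname{ad}_{Z_k}$, so $[Z_k,X_i]=0$ for every $i,k$. (b)~Applying $Z_k$ to $X_i(G_{ab}) = \mu_{iab}G_{ab}$ and using (a) shows that $Z_k(G_{ab})$ is itself a semi-invariant of $X$ and of all the $X_i$ with the same eigenvalue data as $G_{ab}$; together with the diagonal weight grading of $Z_k$, this forces $Z_k(G_{ab}) = \chi_{kab}\, G_{ab}$ for some $\chi_{kab}\in\sqrt{-1}\bbZ$. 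Consequently $\rho$ preserves each commuting field $X_i$ and acts on every semi-invariant $G_{ab}$ by a character of $\bbT^\tau$.

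With this equivariance in hand I would close the argument by a geometric approximation scheme in the style of \cite{Zung_Birkhoff2005}: truncate the formal conjugation at order $N$, average over $\bbT^\tau$ to obtain an analytic approximation $\rho_N$ of $\rho$ preserving $X, X_1,\dots, X_p$ and all the $G_{ab}$ up to order $N$, and iterate via a Newton-type fast-convergence scheme exploiting the Diophantine incommensurability of the $\gamma_k$. The analytic semi-invariants $G_{ab}$ act as rigid analytic hypersurfaces on which $\rho$ must operate by characters, and this rigidity is what forces the limit action to be genuinely analytic rather than merely formal. The main obstacle I anticipate is the bookkeeping for the pole divisors of the rational fields $X_i$, which are contained in $\bigcup_{a,b}\{G_{ab}=0\}$: one must check that each approximation $\rho_N$ preserves this divisor stratification and that the fast-convergence estimates survive near the analytic singularities of the $X_i$. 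Once this is in place the scheme produces an analytic intrinsic torus action, and hence an analytic Poincaré-Birkhoff normalization of $X$.
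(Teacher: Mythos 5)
Your reduction to analyticity of the intrinsic torus action, and your structural claims that the $Z_k$ commute with the $X_i$ and act on the semi-invariants by characters, do match the first half of the paper's argument (its Lemma \ref{lem:Walcher}). But there are two genuine gaps. The decisive one is your convergence mechanism: you propose a ``Newton-type fast-convergence scheme exploiting the Diophantine incommensurability of the $\gamma_k$.'' Incommensurability of the $\gamma_k$ is merely the absence of integer linear relations; it is \emph{not} a Diophantine condition and yields no small-denominator estimates, and the theorem is asserted without any such hypothesis (the eigenvalues may be Liouville-type). A KAM/Newton iteration therefore has nothing to run on, and this is precisely the situation the geometric method is designed to avoid. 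The paper instead builds, for each $m$, a \emph{genuine} (not approximate) $\mathbb{T}^\tau$-action with bounded holomorphic generators on the set $\{\|z\|\leq\varepsilon_{\mu(m)},\ d(z,\mathcal{S})>\|z\|^m\}$, by projecting the orbits of the truncated generators $Z_{i,\mu(m)}$ onto the level sets of the first integrals and using the flat affine structure coming from $X_1,\hdots,X_p$; Łojasiewicz inequalities control everything off the singular set $\mathcal{S}$, the actions for different $m$ agree on overlaps, and a holomorphic extension lemma across the resulting sharp-horn neighborhood produces the analytic action. Your proposal contains none of this, and the ``rigidity'' you attribute to the hypersurfaces $\{G_{ab}=0\}$ is not a substitute for it.

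The second gap is in your step (b): a semi-invariant $G$ of $X$ in normal form satisfies $X(G)=\lambda G$ with $\lambda$ a formal series, and it is \emph{not} true in general that $Z_k(G)=\chi G$ with $\chi$ constant. Walcher's lemma only gives this after replacing $G$ by $\beta G$ for a suitable formal unit $\beta$, and one then recovers $Z_k(P)=0$ for the Darboux product $P=\prod G^{c}$ by showing $\beta$ itself is $X^s$-invariant and dividing it back out. Relatedly, your commutation argument for rational $X_i$ needs the pole divisor handled explicitly (write $X_i=X_i'/H_i$, show $H_i$ is a semi-invariant, and argue degree by degree), which is Assertion v) of the paper's lemma. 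Finally, you do not address the multivaluedness of the $F_a$ along the approximating circle orbits: the paper must verify via a monodromy computation that $\sum_b c_{ab}b_{ab}=0$, so that $\ln F_a$ is single-valued near each orbit; without this the projection onto level sets of the first integrals is not even defined.
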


The proof of Theorem \ref{thm:main} presented below is based on the toric approach to normalization \cite{Zung_Birkhoff2005,Zung_Poincare2002}, and the so-called \textit{toric conservation principle} 
\cite{Zung_Integrable2016,Zung_AA2018,ZungThien_Stochastic2017,JiangRatiuZung_Normal2024}. In our case, this toric conservation principle becomes a lemma (Lemma \ref{lem:toricconservation}) that says that if a vector field $X$ has a rational first integral $F$ or a rational commuting vector field $Y$, then the associated torus action of $X$ around the origin will also preserve $F$ or $Y$. 

\section{Semi-invariants and toric conservation}
\label{section:toricconservation}

Without loss of generality, from now on we will work in the complex category, i.e., with $\bbK = \bbC$. (Even when a system is real, its associated torus action will generally act in the complex space anyway.) 

Recall that a rational function $F$ in $(\bbC^n,0)$ can be written as
a product $F = \prod_{j=1}^s G_j^{c_j},$ where $G_j$ are irreducible functions, and $c_j$ are integers. If $c_j$ are arbitrary complex numbers instead of integers, then the function $F$ will be called a \textit{\textbf{Darboux function}} instead of a rational function, and it will be multi-valued in general in that case. 

A (formal or analytic) function $G$ is called a (formal or analytic) \textit{\textbf{semi-invariant}} of an analytic vector field $X$ (that vanishes at the origin in  $(\bbC^n,0)$) if
$X(G)$ is divisiblae by $G$, i.e., $X(G) =  \lambda.G$ for some formal or analytic function $\lambda$. It is probably a well-known fact that for any Darboux first integral (and in particular, for any rational first integral) $F = \prod_{j=1}^s G_j^{c_j}$ of $X$, the factors $G_j$ are semi-invariants of $X$, see, e.g.,
Zhang \cite{Zang_Book2017}. For completeness, we will include this fact, together with a proof, in the following lemma about the toric conservation of rational first integrals and commuting vector fields.
A part of this lemma is due to Walcher \cite{Walcher_Poincare2000} and Zhang \cite{Zang_Book2017}.

\begin{lem}
\label{lem:toricconservation}
Let $X = X^s + X^n + \sum_{k \geq 2} X^{(k)}$ be a formal vector field, vanishing at 0, in 
Poincaré-Dulac normal form, i.e. $[X^s,X] = 0$. 

i) (Walcher \cite{Walcher_Poincare2000}) Let
$F = \sum_{k \geq r} F^{(k)}$ with $F^{(r)} \neq 0$ (where $F^{(k)}$
is homogeneous of degree $k$) be a formal semi-invariant of $X$, 
i.e. $X(F) = \lambda. F,$ for some formal power series 
$\lambda = \sum_{k \geq 0} \lambda^{(k)}$. Then there is a formal power series 
$\beta = 1 + h.o.t.$ such that $F^* := \beta F$ satisfies $X(F^*) = \lambda^*. F^*$,
with $X^s(\lambda^*) = 0$ and $X^s(F^*) = \lambda^{(0)}. F^*$.

ii) Moreover, if $X^s = \sum_{i=1}^\tau \gamma_i Z_i$ where $\tau$ is the toric degree of $X$  and $Z_1,\hdots, Z_\tau$ are the generators of its intrinsic associated torus action then for every $i=1,\hdots,\tau$ we also have
$Z_i(F^*) = \lambda^{(0)i}. F^*$ for some number $\lambda^{(0)i}$.

iii) Let $P = \prod_{i=1}^sG_i^{c_i}$ be a formal Darboux-type first integral of $X$, i.e. $X(P) = 0$, where $c_i$ are complex numbers and $G_i$ are irreducible formal functions. Then $P$ is also a formal Darboux-type first integral of $X^s$, i.e., $X^s(P) = 0$.

iv) Moreover, if $P$ is a formal rational first integral, i.e., $c_i \in \mathbb{Z}$ for all $i$, then $P$ is also preserved by the associated torus action of $X$: we  have
$Z_i(P) = 0$ for every $i=1,\hdots,\tau$. 

v) If $Y$ is a formal rational vector field such that $[X,Y] = 0$
then we also have $[Z_i,Y] = 0$ for $i=1,\hdots,\tau$.
\end{lem}

\begin{proof}
i) Since this lemma is very important for us, let us here recall its
proof given by Walcher  \cite{Walcher_Poincare2000}.
The semi-invariance of $F$ with respect to $X$ is equivalent to
\begin{equation}
X^s(F^{(r+j)}) + X^n(F^{(r+j)}) + X^{(2)}(F^{(r+j-1)}) + \hdots + 
X^{(j+1)}(F^{(r)}) = \lambda^{(0)} F^{(r+j)} +  \hdots +
\lambda^{(j)} F^{(r)}
\end{equation}
for all $j \geq 0$.

Note that $X^s(\lambda^{(0)}) = 0$. Now assume that $X^s(\lambda^{(j)}) = 0$ 
for all $j < k$, and let $\tilde{F} = (1 + \beta_k)F$, with $\beta_k$ beging some homogeneous function of degree $k$. Then
\begin{equation}
\tilde{F}  = F^{(r)} + \hdots + F^{(r+k-1)} + (F^{(r+k)} + F^{(r)} \beta_k) + \hdots
\end{equation}
and
\begin{equation}
X(\tilde{F}) = \tilde{\lambda}\tilde{F}
\end{equation} 
with 
\begin{equation}
\tilde{\lambda} = \lambda^{(0)} + \hdots + \lambda^{(k-1)} + (\lambda^{(k)} + X^s(\beta_k)) + \hdots
\end{equation} 
Due to the semi-simplicity of $X^s$, one can choose $\beta_k$ such that 
$X^s(\lambda^{(k)} + X^s(\beta_k)) = 0$. Thus, the first assertion is proved by
induction on $k$: $\beta$ can be constructed in the form of an infinite
product $\prod_{k=1}^\infty (1 + \beta_k)$, where each $\beta_k$ is homogeneous of degree $k$. (Such an infinite product converges in the space of formal  power series). Finally, from $X^s(\lambda^*) = 0$ one deduces that $X^s(F^*) = \lambda^{(0)}. F^*$, again by induction and by the semi-simplicity of $X^s$. 

ii) For any formal power series $H$, the fact that $X^s(H) = \alpha H$
for some number $\alpha$ (here $H = F^*$ and $\alpha = \lambda^{(0)}$)
automatically implies that
$Z_i(H) = \sqrt{-1}\alpha_{i}. H$ for every $i=1,\hdots,\tau$ and for appropriate integers $\alpha_{i}$. This can be seen directly by looking at the Taylor expansion of 
$X^s(H) = \sum_{i=1}^\tau \gamma_i Z_i(H)$ 
term by term in a coordinate system which diagonalizes the $Z_i$. Note that $\sum_{i=1}^\tau \gamma_i 
 \sqrt{-1} \alpha_i = \alpha$,
which determines the values of $\alpha_i$ uniquely from the 
value of $\alpha$, due to the incommensurability of the numbers
$\gamma_1,\hdots, \gamma_\tau$.

iii) The condition that $X(P) = 0$ can be rewritten as
\begin{equation}
\sum_{i=1}^s c_i \dfrac{X(G_i)}{G_i} = 0.
\end{equation}
It then follows from the irreducibility of the $G_i$ that $X(G_i)$ is divisible by $G_i$, i.e. each $G_i$ is a semi-invariant of $X$:
$X(G_i)= \lambda_i.G_i$, where $\lambda_i = \sum_k \lambda_i^{(k)}$
is a formal power series. In particular, we have
\begin{equation}
\sum_{i=1}^s c_i \lambda_i = 0 \quad \text{and} \quad 
\sum_{i=1}^s c_i \lambda_i^{(0)} = 0.
\end{equation}

Using Assertion i) of this lemma, we can multiply each $G_i$ by some invertible formal power series $\beta_i$ such that the new functions $G_i^* = \beta_i G_i$ satisfy $X(G_i^*) = \lambda_i^* G_i^*$ with $X^s(\lambda_i^*) = 0$
and $X^s(G_i^*) = \lambda_i^{(0)}. G_i^*$. The new Darboux-type function
$P^* = \prod_{i=1}^s(G_i^*)^{c_i} = \beta P$ satisfies $X(P^*) = \Lambda^* P^*$ with $X^s(\Lambda^*) = 0$
and $X^s(P^*) = \Lambda^{*(0)}. P^*$. On the other hand, $X(P^*) = X(\beta P)
= X(\beta)P + \beta X(P) = X(\beta) P$, hence $\Lambda^*= X(\beta)$. Therefore the condition $X^s(\Lambda^*) = 0$ means that $X^s(X(\beta))= 0$,
which implies that $X^s(\beta) = 0$ because $X^s$ is the semisimple part of $X$. On the other hand, we have that
\begin{equation}
\dfrac{X^s(P^*)}{P^*} = \sum_i c_i \dfrac{X^s(G_i^*)}{G_i^*} = \sum_i c_i \lambda_i^{(0)} = 0.
\end{equation}
Since both $\beta$ and $P^*$ are preserved by $X^s$, the quotient $P = P^*/\beta$ is also preserved by $X^s$, i.e. we have $X^s(P) = 0$.

iv) According to the previous assertions,
each factor $G_j$ is a semi-invariant of $X$, hence also a semi-invariant of every $Z_i$ ($i =1,\hdots,\tau$), and we can modify the $G_j$ (by multiplying them with appropriate invertible functions,
without modifying the total product $P = \prod_{i=1}^sG_i^{c_i}$) so that they become semi-invariants of the  $Z_i$ with constant multiplicators 
(i.e. they are eigenvectors for the action of $Z_i$ on the space of formal functions):
\begin{equation}
Z_i(G_j) = \sqrt{-1} \lambda_{ij}. G_j
\end{equation}
with $\lambda_{ij} \in  \mathbb{Z}$.
The equality $X^s(P) = 0$ means that
\begin{equation}
\label{eqn:Sum0}
\sum_{i=1^\tau}\gamma_i(\sum_{j=1}^s c_j \lambda_{ij} ) =0,
\end{equation}
which implies that
\begin{equation}
\sum_{j=1}^s c_j \lambda_{ij} =0. 
\end{equation}
for every $i$, because of the incommensurability of the
$\gamma_i$ and the fact that the numbers $c_j$
and $\lambda_{ij}$ are integers when $P$ is a rational function.

v) Write $Y$ as $Y =Y'/F$, where $F$ is an analytic for formal function, and $Y'$ is an analytic or formal vector field which is not divisible by any irreducible factor of $F$. The condition $[X,Y] = 0$ means that
$X(F) Y' = F [X,Y']$, which implies that $X(F)$ is divisible by $F$, i.e.
$F$ is a semi-invariant of $X$. Invoking Assertion ii) of this lemma and multiplying $F$ by an invertible function if necessary, we may assume that
$X(F) = \lambda.F$ such that $Z_i(\lambda) = 0$ and $Z_i(F) = \lambda_i.F$
for all $i = 1,\hdots,\tau$, where $\lambda$ is a function which is invariant with respect to the associated torus action, and $\lambda_1,\hdots, \lambda_\tau$ are constants such that
$\sum_{i=1}^\tau \gamma_i \lambda_i= \gamma(0).$ ($X^s = \sum \gamma_i Z_i$ is the semisimple part of $X$). The equation
$X(F) Y' = F [X,Y']$ implies that $[X,Y'] = \lambda. Y'$. By looking at the Taylor expansion of this equation in a coordinate system which diagonalizes the $Z_i$, degree by degree, one obtains by induction that $[Z_i, Y'] = \lambda_i Y'$ (for every $i=1,\hdots,\tau$). For example, with $Y' = \sum_{i \geq r} Y'^{(i)}$, $X = X^s + X^n + \sum_{i \geq 2} X^{(i)}$ and
$\lambda = \sum_{i \geq 0} \lambda^{(i)}$, at the lowest degree we have
\begin{equation}
[X^s+X^n, Y'^{(r)}] = \lambda^{(0)} Y'^{(r)},
\end{equation}
which implies that $Y'^{(r)}$ must be an eigenvector with respect to the Lie bracket operator $[X^s,.]$ with eigenvalue $\lambda^{(0)}$, and hence it must also be an eigenvector of
the operator $[Z_i,.]$ with eigenvalue $\lambda_i$ for each $i = 1,\hdots, \tau$. At the next degree we have
\begin{equation}
[X^s+X^n, Y'^{(r+1)}] - \lambda^{(0)}Y'^{(r+1)} = \lambda^{(1)}Y'^{(r)} - [X^{(2)},Y'^{(r)}].
\end{equation}
The right hand side of the above equation is an eigenvector of
$[Z_i,.]$ with eigenvalue $\lambda_i$ for each $i = 1,\hdots, \tau$, and so is the left hand side. But, since $Z_i$ commutes
with $X^s+X^n$, it implies that either $Y'^{(r+1)}$ must be an
an eigenvector of $[Z_i,.]$ with eigenvalue $\lambda_i$ or
$[X^s+X^n, Y'^{(r+1)}] - \lambda^{(0)}Y'^{(r+1)} = 0$. But if
$[X^s+X^n, Y'^{(r+1)}] - \lambda^{(0)}Y'^{(r+1)} = 0$ then we can also deduce from this equality that $[Z_i,Y'^{(r+1)}] =
\lambda_i Y'^{(r+1)}$. So in any case we have $[Z_i,Y'^{(r+1)}] =
\lambda_i Y'^{(r+1)}$ (for every $i$). Similarly for the higher degrees.

The equalities  $[Z_i,Y'] = \lambda_i Y'$ and $Z_i(F) = \lambda_i F$ imply that $[Z_i,Y] = [Z_i,Y'/F] = 0$, i.e. $Y$ is preserved by the intrinsic associated torus action. Assertion v) is proved.
\end{proof}

\textbf{Remark.}
In a preprint written some years ago (arXiv:1803.04800, not submitted for publication anywhere) we wrongly thought that the toric conservation principle also works for Darboux-type functions, but it is not true.
In fact, Assertion iv) of the above lemma holds only for rational first integrals, and not for multi-valued Darboux integrals in general, and it is very easy to construct examples of Darboux first integrals that are not conserved by associated torus actions of a diagonal linear vector field. 
This is the reason why we are not able to extend our main result to the case of Darboux-integrable systems, where the first integrals are required to be only Darboux functions, and not rational functions.

\section{Sketch of the proof of Theorem \ref{thm:main}}
\label{section:proof}

In order to explain our proof, we will divide it into several steps.

\textbf{Step 1: Initialization. Fixing of notations.}

Consider a rational integrable analytic vector field $X$ 
on $(\mathbb{C}^n,0)$ with $X(0) = 0$. 
Denote by $F_i = \prod_{j=1}^{s_i} G_{ij}^{c_{ij}}$ ($i=1,\hdots,q$)
the common rational first integrals of the commuting rational vector fields $X_1=X,X_2,\hdots,X_p$ ($p+q=n$) given by the rational integrability condition of $X$. Here $G_{ij}$ are irreducible local analytic functions and
$c_{ij}$ are integers.
For each $i = 2,\hdots,p$, denote by $H_i$ a nontrivial local 
analytic function such that $H_iX_i$ is a local analytic vector field.
Denote by $\mathcal{S}$ the union of the local analytic sets
$ \{G_{ij} (x) = 0 \}$,$ \{H_{i} (x) = 0 \}$,
$ \{ dF_1 \wedge \hdots \wedge dF_q(x) = 0 \}$ and
$\{X_1 \wedge \hdots \wedge X_p (x) = 0\}$ in $(\mathbb{C}^n,0)$, i,e, the local set of all possible singular points of our system. A priori, $\mathcal{S}$
is an analytic subset of $(\mathbb{C}^n,0)$ of codimension at least 1,
and we will call it the \textbf{\textit{singularity set}} of the system.

As before, denote by $\tau$ the toric degree of $X$, $X = X^S + X^N$ the intrinsic
(a-priori formal) decomposition of $X$ into the sum of its semisimple part and its nilpotnent part, and $X^S = \sum_{i=1}^\tau \gamma_i Z_{i}$ into a linear combination of the generators $Z_1,\hdots,Z_\tau$ of its associated torus action, which is a-priori only formal, but we want to show that this torus action is in fact analytic.  

Denote by $d(z,\mathcal{S})$ the distance from a point $z$ in 
$(\mathbb{C}^n,0)$ to $\mathcal{S}$ (with respect to a given norm
on $(\mathbb{C}^n,0)$, doesn't really matter which norm). By 
\L ojasiewicz-type inequalities (see, e.g., \cite{Lojasiewicz}),
there exist a natural
number $D$ and a positive number $\delta >0$
such that for any element 
$A$ of a finite family of holomorphic and rational functions and tensor fields
on $(\mathbb{C}^n,0)$ that we will use in this paper (our rational vector fields
$X_i$ and the log-derivatives $X_i(F_j)/F_j$ of our rational first
integrals belong to this family), which have all of their zeros and poles in
$\mathcal{S}$, we have
\begin{equation}
\label{eqn:Lojasiewicz}
(d(z, \mathcal{S}))^{-D} \geq \|A (z)\| \geq (d(z, \mathcal{S}))^D
\end{equation}
for any $z \in (\mathbb{C}^n,0)$ with $\|z\| < \delta$. These \L ojasiewicz-type inequalities will be important for our construction.

\textbf{Step 2: Construction of a system-preserving torus action in a union of open domains outside of $\mathcal{S}$}

For each natural number $m$, we will construct an open subset $\mathcal{U}_m$
near the origin of $(\mathbb{C}^n,0)$ outside of $\mathcal{S}$ (i.e., $\mathcal{U}_m \cap \mathcal{S} = \emptyset$), together with an effective holomorphic torus $\mathbb{T}^\tau$-action in $\mathcal{U}_m$, which satisfies the following properties:

1a) There is a small open neighborhood $\mathcal{V}_m$ of the origin in $(\mathbb{C}^n,0)$ ($\mathcal{V}_n$ gets smaller when $m$ gets bigger: $\mathcal{V}_{m+1} \subset \mathcal{V}_m$ for every $m$)
such that the complement $\mathcal{V}_m \setminus \mathcal{U}_m$ is a \emph{horn of sharpness order at least $m$} around $\mathcal{S}$ in the sense that 
\begin{equation}
\label{eqn:hornordern}
d(z, \mathcal{S}) \leq \|z\|^m
\end{equation}
for any point $z \in \mathcal{V}_m \setminus \mathcal{U}_m$. 

1b) The about torus $\mathbb{T}^\tau$-action preserves our system, i.e., it preserves all the first integrals $F_1,\hdots,F_q$ and commuting vector fields $X_1=X,X_2,\hdots,X_p$ of the system. 

1c) For any two natural numbers $k$ and $m$, the constructed torus actions  
on $\mathcal{U}_k$ and on $\mathcal{U}_m$ will coincide on their intersection
$\mathcal{U}_k \cup \mathcal{U}_m$. In other words, we have a holomorphic torus 
$\mathbb{T}^\tau$-action on the union $\mathcal{U} = \bigcup_{m=1}^\infty \mathcal{U}_m$ which preserves the system.

The construction of the above torus action (for each $m \in \mathbb{N}$) will be done via a geometric approximation method, by the following substeps:

1d) Construct the associated torus action for an analytically normalizable system which approximates our system up to a sufficiently high order (such a system always exists, by a truncated normalization process à la Poincaré-Birkhoff), and then that this torus action constructed in 1d) ``almost conserves'' our system in $\mathcal{U}_n$, in a sense to be made more precise in Lemma 
\ref{lemma:orderM}. 

1e) Due to this almost conservation, we can project the generating vector fields of the torus action constructed in 1d) to the level 
sets of the system (i.e. the subsets where all the first integrals are constant), and then use the fact that these level sets have a natural affine structure generated by our original commuting vector fields to show that these projected vector fields can be approximated (in a unique way) by other vector fields on our level sets which generate a $\mathbb{T}^\tau$-action which preserves the system.   

Details of the above construction will be given in Sections
\ref{section:almostconservation} and \ref{section:construction}.

\textbf{Step 3. Analytic extension and conclusion.}

By construction, 
the complement $\mathcal{H} = \mathcal{V}_1 \setminus \mathcal{U}$ of the union
$\mathcal{U} = \bigcup_{n=1}^\infty \mathcal{U}_n$ is a \textbf{\textit{sharp horn}} around the singularity set $\mathcal{S}$ in the sense that the degree of sharpness tends to infinity when it tends to the origin. In other words, we have
\begin{equation}
\label{eqn:sharphorn}
\lim_{\varepsilon \to 0} \inf_{z \in \mathcal{H}, \|z\| = \varepsilon}
\left| \dfrac{\ln d(z,\mathcal{S})}{\ln \varepsilon} \right| = \infty .
\end{equation}
Hence, we can  invoke the following key holomorphic extension lemma from \cite{Zung_Birkhoff2005}:
\begin{lem}[\cite{Zung_Birkhoff2005}]
\label{lem:extension}
Let $\mathcal{S}$ be a complex analytic subset of positive codimension in  $({\mathbb C}^n,0)$ and $\mathcal{U}$ be a subset of $({\mathbb C}^n,0)$ whose complement  $\mathcal{H} = ({\mathbb C}^n,0) \setminus \mathcal{U}$ is a sharp-horn of  $\mathcal{S}$.
Then any bounded holomorphic function on $\mathcal{U}$
admits a holomorphic extension in a neighborhood of $0$ in ${\mathbb C}^n$.
\end{lem}

Using the above lemma, we get an analytic extension of the generating vector fields (via the analytic extension of their coefficients in a coordinate system)
of the torus $\mathbb{T}^\tau$-action from $\mathcal{U}$ to an open neighborhood of the origin in $({\mathbb C}^n,0)$. By analytic continuation, these analytically extended vector fields still generate a holomorphic torus action which preserve the system. By dimension consideration, this torus action cannot be anything else but the intrinsic torus action associated to our system. 
Thus, we have proved that the associated torus action of our system is analytic, and therefore the system is analytically normalizable.

\section{Torus actions that almost conserve the system}
\label{section:almostconservation}

By the classical method of step-by-step normalization,
there is an infinite sequence of local analytic coordinates systems
$(x_{1,m},\hdots, x_{n,m})$  ($m \in \mathbb{N}$) on $(\mathbb{C}^n,0)$ with the following properties:

i) $x_{i,m} (z) - x_{i,m'} (z) = o(\|z\|^{\min(m,m')})$ on $(\mathbb{C}^n,0)$ for any $m,m', i$, where $\|.\|$ denotes a norm on $(\mathbb{C}^n,0)$ (it doesn't matter which norm). 
The formal limit of these coordinate systems
when $m$ goes to infinity is a normal normalization of $X$.

ii) The Taylor expansion of the vector field $X$ in the coordinate system $(x_{1,m},\hdots, x_{n,m})$ is
\begin{equation}
X = X^{s}_m + X^{n}_m + X^{(2)}_m + X^{(3)}_m + \hdots,
\end{equation}
such that 
$[X^{s}_m , X] (z) = o(\|z\|^m)$, i.e. it does not contain terms of order less than or equal to $m$. Note that when we change the coordinate system, we will still keep the vector fields $X^{(k)}_m$ in the above decomposition, though they will no longer be homogeneous in new coordinate systems. 

iii) $X^{(s)}_m $ tends to the semisimple part $X^S$ of $X$ in the formal category when $m$ tends to infinity.

iv) For each $m \in \mathbb{N}$ we have
\begin{equation}
X^{(s)}_m = \sum_{i=1}^\tau \gamma_i Z_{i,m}
\end{equation}
where $\tau$ is the toric degree of $X$ at $0$, each vector field $Z_{i,m}$
is a diagonal linear vector field in the coordinate system 
$(x_{1,m},\hdots, x_{n,m})$,
\begin{equation}
Z_{i,m} = \sqrt{-1} \sum_{j=1}^n \rho_{ij} x_{j,m} 
\dfrac{\partial}{\partial x_{j,m}},
\end{equation}
Here $\gamma_1,\hdots,\gamma_\tau$ are incommensurable complex numbers which
do not depend on $m$, and the numbers $\rho_{ij} \in \mathbb{Z}$ are integers which do not depend on $m$.

v) For each $m \in \mathbb{N}$, the vector fields 
$Z_{1,m},\hdots,Z_{\tau,m}$ are the generators of a local effective torus $\mathbb{T}^\tau$-action on $(\mathbb{C}^n,0)$ which preserves 
$X$ up to order $m$.

vi) $Z_{i,m}$ tends to $Z_i$ ($i=1,\hdots,\tau$) formally when $m$ tends
to infinity. More precisely, the order of $Z_i - Z_{i,m}$ (i.e. the lowest degree of its non-zero terms) is at least $m+1$ for every $m$. Recall that  $Z_{1},\hdots,Z_{\tau}$ denote
the generators of the intrinsic associated formal torus $\mathbb{T}^\tau$-action
and $\sum_{i=1}^\tau \gamma_i Z_{i} = X^S$ is the semisimple part of $X$. 

The following lemma says that the analytic torus actions generated by $(Z_{1,m},
\hdots, Z_{1,m})$ will almost conserve our system (not only the vector field $X$, but also the first integrals and the other commuting vector fields) up to any given order, provided that $m$ is large enough for the required order of conservation:

\begin{lem}
\label{lemma:orderM} 
For any increasing map $\nu: \mathbb{N} \to \mathbb{N}$, there exists  an
increasing map 
$\mu: \mathbb{N} \to \mathbb{N}$ and a decreasing infinite sequence of positive numbers $(\varepsilon_m)_{m \in \mathbb{N}}$, such that the following inequalities about the almost conservation with respect to the vector fields $Z_{i,m}$ are satisfied: For any $i= 1,\hdots, \tau$, any $j=1,\hdots,p$, any $m \in \mathbb{N}$, any
$\mu \geq \mu(m)$, any $k=1,\hdots,q$,
and any point $z \in \mathbb{C}^n $ such that
$\|z\| < \varepsilon_\mu$  and $d(z,\mathcal{S}) > \|z\|^m$ we have:
\begin{equation}
\label{eqn:NearZero1}
\|[Z_{i,\mu},X_j] (z)\| \leq \|z\|^{\nu(m)}
\end{equation}
and
\begin{equation}
\label{eqn:NearZero2}
\|Z_{i,\mu} (F_k) (z)\| \leq \|z\|^{\nu(m)}
\end{equation}
\end{lem}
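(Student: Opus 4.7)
The plan is to convert the formal identities $[Z_i,X_j]=0$ (from Lemma \ref{lem:Walcher} v), applied to the rational commuting fields $X_j$) and $Z_i(F_k)=0$ (from Lemma \ref{lem:Walcher} iv), applied to the Darboux first integrals) into pointwise estimates on the approximating generators $Z_{i,\mu}$. Property vi) tells us that $Z_i - Z_{i,\mu}$ vanishes formally to order $\geq \mu+1$ at $0$. Consequently, after clearing the denominators coming from the singular set $\mathcal{S}$, the ``defects'' $[Z_{i,\mu},X_j]$ and $Z_{i,\mu}(\ln F_k)$ are represented by analytic objects whose Taylor expansions at $0$ vanish to order $\geq \mu$. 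Cauchy's inequalities will translate that high-order vanishing into smallness on a small ball, and the \L ojasiewicz bound \eqref{eqn:Lojasiewicz} will control the cleared denominators from below by a power of $d(z,\mathcal{S})\geq \|z\|^m$ on the prescribed region.

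For \eqref{eqn:NearZero1}, I would write each $X_j$ as $X_j=Y_j/H_j$ with $Y_j,H_j$ analytic and compute
\[
H_j^2\,[Z_{i,\mu},X_j] \;=\; -Z_{i,\mu}(H_j)\,Y_j \;+\; H_j\,[Z_{i,\mu},Y_j],
\]
which is an analytic vector field near $0$. The analogous formula with $Z_i$ in place of $Z_{i,\mu}$ is the formal power series avatar of $H_j^2[Z_i,X_j]$, which vanishes identically by Lemma \ref{lem:Walcher} v); subtracting, the Taylor expansion of $H_j^2[Z_{i,\mu},X_j]$ at $0$ coincides with that of $-(Z_{i,\mu}-Z_i)(H_j)\,Y_j + H_j\,[Z_{i,\mu}-Z_i,Y_j]$, in which each summand carries a factor of order $\geq \mu$ inherited from $Z_{i,\mu}-Z_i$ via property vi). A Cauchy estimate on a ball of controlled radius $\eta_\mu$ then gives $\|H_j^2[Z_{i,\mu},X_j](z)\| \leq M_\mu (\|z\|/\eta_\mu)^\mu$, while \eqref{eqn:Lojasiewicz} supplies $|H_j(z)|^{-2}\leq d(z,\mathcal{S})^{-2D}\leq \|z\|^{-2mD}$ on our region; combining yields $\|[Z_{i,\mu},X_j](z)\| \leq M_\mu \eta_\mu^{-\mu} \|z\|^{\mu-2mD}$.

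For \eqref{eqn:NearZero2}, setting $P_k = \prod_{j=1}^{s_k} G_{kj}$ turns $P_k\cdot Z_{i,\mu}(\ln F_k) = \sum_j c_{kj}\,Z_{i,\mu}(G_{kj})\prod_{j'\neq j}G_{kj'}$ into an analytic function. The corresponding expression with $Z_i$ vanishes formally by Lemma \ref{lem:Walcher} iv), so the same reasoning shows $P_k\cdot Z_{i,\mu}(\ln F_k)$ has Taylor order $\geq \mu$ at $0$, and \eqref{eqn:Lojasiewicz} with $|P_k(z)| \geq d(z,\mathcal{S})^{Ds_k} \geq \|z\|^{mDs_k}$ completes the analogous bound. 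The threshold $\mu(m)$ and the sequence $\varepsilon_\mu$ are then chosen diagonally: take $\mu(m)$ large enough that $\mu - AmD \geq \nu(m)+1$ for all $\mu\geq\mu(m)$ (where $A$ bounds both $2$ and $\max_k s_k$), and then shrink $\varepsilon_\mu$---rapidly in $\mu$, if needed---so that $M_\mu\eta_\mu^{-\mu}\varepsilon_\mu \leq 1$ for each $\mu$. The main technical obstacle is exactly this last point: because the analytic normalizing coordinate systems $(x_{1,\mu},\ldots,x_{n,\mu})$ need not converge as $\mu\to\infty$, the auxiliary constants $M_\mu,\eta_\mu$ are not uniform in $\mu$, and the radii $\varepsilon_\mu$ must absorb their possibly catastrophic growth---which is precisely why the lemma only asks for $\varepsilon_\mu\to 0$ rather than a fixed neighborhood.
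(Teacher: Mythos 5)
Your proposal is correct and follows essentially the same route as the paper: write $X_j=Y_j/H_j$, use Lemma \ref{lem:Walcher} v) (resp.\ iv) together with the cleared denominator $P_k$) to see that the cleared defect has high Taylor order inherited from $Z_{i,\mu}-Z_i$, bound it on a small ball, divide back using the \L ojasiewicz inequality \eqref{eqn:Lojasiewicz} and $d(z,\mathcal{S})\geq\|z\|^m$, and choose $\mu(m)$ linearly in $Dm$ plus $\nu(m)$ with $\varepsilon_\mu$ shrunk to absorb the $\mu$-dependent constants. Your treatment is in fact slightly more explicit than the paper's (which only proves \eqref{eqn:NearZero1} and asserts \eqref{eqn:NearZero2} is similar, and which hides the Cauchy-estimate step in the phrase ``provided that $\varepsilon_\mu$ is sufficiently small''); the minor discrepancy between your order $\geq\mu$ and the paper's $\geq\mu-1$ for the cleared defect is immaterial to the conclusion.
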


\begin{proof}
We will prove Inequality \eqref{eqn:NearZero1} (for a given couple of indices
$i,j$). The proof of Inequality
\ref{eqn:NearZero2} is absolutely similar.

Write $X_j$ as $X_j = X_j'/H_j$, where $X_j'$ and $H_j$ are local analytic and
$X_j'$ is not divisible by any irreducible factor of $H_j$. According to Lemma 
\ref{lem:toricconservation}, we have $[Z_i, X_j] = 0$, which means that 
\begin{equation}
Z_i(H_j) X_j' - H_j [Z_i, X_j'] = 0.
\end{equation}
Since the order of $Z_{i,\mu} - Z_i$ is at least $\mu + 1$ for every $\mu$, we have
that the order of $Z_{i,\mu}(H_j) X_j' - H_j [Z_{i,\mu}, X_j'] =
(Z_{i,\mu}-Z_i) (H_j) X_j' - H_j [Z_{i,\mu} - Z_i, X_j']$ 
is at least $\mu-1$, and so we may assume that
\begin{equation}
\|Z_{i,\mu}(H_j) X_j' (z) - H_j [Z_{i,\mu}, X_j'] (z) \| \leq \|z\|^{\mu-2}
\end{equation}
for every $\mu$ and every $\|z\| < \varepsilon_\mu$ (provided that 
$\varepsilon_\mu$ is sufficiently small), which implies that
\begin{equation}
\| [Z_{i,\mu}, X_j] (z) \| \leq \dfrac{\|z\|^{\mu-2}}{|H_j(z)|^2}.
\end{equation}
Applying the \L ojasiewicz inequality \eqref{eqn:Lojasiewicz} and the inequality
$d(z,\mathcal{S}) \geq \|z\|^m$ , we get
\begin{equation}
\| [Z_{i,\mu}, X_j] (z) \| \leq \dfrac{\|z\|^{\mu-2}}{d(z,\mathcal{S})^{2D}}
\leq \|z\|^{\mu-2 - 2Dm} ,
\end{equation}
so it suffices to take $\mu(m) = \nu(m) + 2 + 2Dm$ for the inequality \eqref{eqn:NearZero1} to be satisfied. 
\end{proof}

\section{Construction of torus actions that conserve the system}
\label{section:construction}

The construction given below is similar to the one given in
\cite{Zung_Birkhoff2005,Zung_Poincare2002}.

 For every natural number $m$, we will construct (inductively on $m$, with backward compatibility), 
 by approximation, a system-preserving 
 ${\mathbb T}^\tau$ action on an open subset
 $\mathcal{U}_m$ of $({\mathbb C}^n,0)$ whose complement is a horn of sharpness order at least $m$ around $\mathcal{S}$.

Take a small neighborhood $\mathcal{V}_m$ of the origin, and the horn 
$\mathcal{H}_m = \{z \in \mathcal{V}_m, d(z,\mathcal{S}) \leq \|z\|^m\}$ of sharpness order $m$ around $\mathcal{S}$
in $\mathcal{V}_m$. We can choose $\mathcal{V}_m$ sufficiently small so that the inequalities in Lemma \ref{lemma:orderM} hold, where $\nu(m)$ is a sufficiently large number so that our approximation arguments below will hold. 

Consider the torus $\mathbb{T}^\tau$ action generated by $(Z_{1,\mu}, \hdots, Z_{\tau,\mu})$, where the number 
$\mu = \mu(m)$ is also from Lemma \ref{lemma:orderM}. This torus action almost preserves the first integrals and the commuting vector fields, up to the order $\nu(m)$, according to Lemma \ref{lemma:orderM}. 
Denote by $\mathcal{O}_\mu(z)$ the orbit of this torus action through $z$. Note that these 
$\mathbb{T}^\tau$ actions (for different $m$) deform the norm of $(\mathbb{C}^m,0)$ but not by too much and in a uniform way, i.e.,
there exists a positive constant $C$ such that
$C^{-1}\|z\| \leq \|z'\| \leq C\|z\|$ for any $z$ such that $\|z\|$ is smaller than a certain positive number depending on $m$ and for any  $z' \in \mathcal{O}_{\mu(m)}(z)$.

Consider an arbitrary point $z \in \mathcal{V}_m \setminus \mathcal{H}_m$. By definition, $\|z\| < \varepsilon_{\mu(m)}$ (we can choose $\varepsilon_{\mu(m)}$ as small as need be by shrinking $\mathcal{V}_m$)
but $d(z,\mathcal{S}) > \|z\|^m$, so the inequalities
in Lemma \ref{lemma:orderM} apply to $z$, with $\mu = \mu(m)$. The  \L ojasiewicz inequality \eqref{eqn:Lojasiewicz} implies that
\begin{equation}
\label{eqn:Lojasiewicz}
\|z\|^{-Dm} \geq (d(z, \mathcal{S}))^{-Dm} \geq \|A (z)\| \geq (d(z, \mathcal{S}))^{D} \geq \|z\|^{Dm}
\end{equation}
(where $D$ is a positive constant which does not depend on $m$)
for every $z \in \mathcal{V}_m \setminus \mathcal{H}_m$, and $A$ is any element from our finite family of non-trivial analytic and rational functions and tensors in our study (that do not have zeros or poles outside of $\mathcal{S}$). Note that this finite family includes the first integrals and their irreducible factors, the coefficients of the commuting vector fields, the (coefficients of) the wedge products of the differentials of the first integrals and of the commuting vector fields. It depends only on the system itself, and not on $m$, and in particular, things like $[Z_{i,\mu},X_j] (z)$ are not in this family.

We can choose $\nu(m) > Dm$ sufficiently large, so that the norms of $[Z_{i,\mu},X_j](z)$ 
$Z_{i,\mu}(F_j)(z)$ are much smaller than the norms of the values at $z$ of the objects from our fixed finite family 
for every $z \in \mathcal{V}_m \setminus \mathcal{H}_m$. In particular, the variation of the first integrals 
on $\mathcal{O}_\mu(z)$ is very small (of order $\|z\|^{\nu(m)}$) compared to the magnitude at $z$ of the objects from our fixed finite family. It implies that the orbit $\mathcal{O}_\mu(z)$ stays ``sufficiently away'' from 
$\mathcal{S}$, in the sense that there is a number $m' \geq m$ (which may depend on the system but will not 
depend on our choice of $\mu(m)$, $\nu(m)$, etc.) such that $d(z',\mathcal{S}) > \|z'\|^{m'}$ for any
$z' \in \mathcal{O}_\mu(z)$.

The orbit $\mathcal{O}_\mu(z)$ is almost tangent to the level set $\mathcal{L}_z = \{z' \in (\mathbb{C}^n,0), F_i(z')= F_i(z) \; \forall \; i=1,\hdots,q\}$, and we can project $\mathcal{O}_\mu(z)$, together with the vector field $Z_{1,\mu}, \hdots, Z_{\tau,\mu}$ on it, orthogonally to $\mathcal{L}_z$, to obtain a ``projected torus''
$\tilde{\mathcal{O}}_\mu(z)$ on $\mathcal{L}_z$ together with ``projected'' tangent vector fields
$\tilde{Z}_{1,\mu}, \hdots, \tilde{Z}_{\tau,\mu}$ on $\tilde{\mathcal{O}}_\mu(z)$. This projected torus 
$\tilde{\mathcal{O}}_\mu(z)$ is also sufficiently far away from $\mathcal{S}$, in the sense that 
$d(y,\mathcal{S}) > \|y\|^{m'}$ for any $y \in \tilde{\mathcal{O}}_\mu(z)$. 

In a ``sufficiently large'' neighborhood of $\tilde{\mathcal{O}}_\mu(z)$ in $\mathcal{L}_z$ we have a natural regular flat affine structure defined by the commuting vector fields $X_1,\hdots, X_q$ (they are tangent to every level set, and they are linearly independent on  $\mathcal{L}_z$ which is of dimension $q$). The vector fields $\tilde{Z}_{1,\mu}, \hdots, \tilde{Z}_{\tau,\mu}$ are almost constant in this flat affine structure, in the sense that if we write $\tilde{Z}_{i,\mu} (y) = \sum_{j=1}^q b_{ij} (y)  X_j$ on $\tilde{\mathcal{O}}_\mu(z)$ 
then the functions $b_{ij} (y)$ are almost constant (their variations are very small compared to their values). 
It follows, by integrating on $\mathcal{L}_z$, that there exist unique numbers
$\hat{b}_{ij}$, that do not depend on  $y \in \mathcal{L}_z$ and are very close to the numbers $b_{ij} (z)$, 
such that the time-1 flows of the vector fields $\sum_{j=1}^q \hat{b}_{ij} X_j$ on $\mathcal{L}_z$ fixes the point 
$z$ (they send $z$ to $z$). By the flat nature of the affine structure on $\mathcal{L}_z$, it implies that the vector fields  generate a $\mathbb{T}^\tau$ action in a neighborhood of $\tilde{\mathcal{O}}_\mu(z)$ in 
$\mathcal{L}_z$ that preserves the system (i.e. preserves our level set and the commuting vector fields $X_1,\hdots, X_p$). Denote the by $\mathcal{O}(z)$ the orbit through $z$ of this $\mathbb{T}^\tau$  action. By uniqueness and continuity arguments, when we very $z \in \mathcal{V}_m \setminus \mathcal{H}_m$, these torus actions (one constructed for each $z$ will fit together to form a
$\mathbb{T}^\tau$ action on the union $\mathcal{U}_m = \bigcup_{z \in \mathcal{V}_m \setminus \mathcal{H}_m} \mathcal{O}(z)$. Note that $\mathcal{U}_m$ contains $\mathcal{V}_m \setminus \mathcal{H}_m$, and so its complement is
a horn of sharpness order at least $m$ around $\mathcal{S}$.

Again, due to the uniqueness of the above numbers $\hat{b}_{ij}$ (which depend on the point $z$ but not on the othe choices), if $z \in U_m \cap U_{m'}$ for two different natural numbers $m$ and $m'$ then the generators of the torus action on $U_m$ will coincide with the generators of the torus action on $U_{m'}$ at $z$. In other words, we get a
system-preserving torus $\mathbb{T}^\tau$-action on the union 
$\mathcal{U} = \bigcup_{m \in \mathbb{N}} \mathcal{U}_m$. The fact that the coefficients of the generators of this 
$\mathbb{T}^\tau$-action are bounded functions on $\mathcal{U}$ are bounded functions are also clear by the construction. Since the complement of $\mathcal{U}$ is a sharp horn around $\mathcal{Z}$ by construction, this torus action admits an analytic extension to a neighborhood of the origin in $(\mathcal{C}^n,0)$ by Lemma \ref{lem:extension}. Thus, we have get a local analytic torus $\mathbb{T}^\tau$ action which preserves our system. 

By construction, the linear part of this analytic torus action coincides with the linear part of the (a priori formal) associated torus action of the vector field $X$. By uniqueness, the associated torus action of $X$ cannot be anything else than this analytic torus action. Thus, we have proved that the associated torus action of $X$ is analytic, and hence $X$ admits an analytic normalization. 

\section{Final remarks and acknowledgements}

In view of Theorem \ref{thm:main}, we have the following conjecture: the main theorem of Morales--Ramis--Simo on Galoisian obstructions to integrability 
of meromorphic Hamiltonian systems \cite{MRS_Galois2007}, and its
extension to the non-Hamiltonian case \cite{AyoulZung_Galois2010}, are also 
valid for the case of rationally integrable systems. 

Another related interesting open question, posed to me by my colleague Emmanuel Paul, is: can Theorem \ref{thm:main} be used to give a geometric proof for analytic normalization results involving Diophantine conditions à la Siegel--Bruno? For resonant analytic vector fields, Bruno's theorem \cite{Bruno_Book1989} says that a Diophantine condition together with the so-called Bruno's \emph{Condition A} imply the existence of an analytic normalization. This Condition A (which says that there is a formal normal form of $X$ of the type $f.X^{(1)}$ where $X^{(1)}$ is linear), is a highly nontrivial formal condition, which is somehow related to the existence of first integrals (it reduces the problem of first integrals of $X$ to the problem of first integrals of the linear part of $X$), and it begs the following question: do formal rational integrability plus Diophantine conditions for analytic vector fields imply analytic rational integrability (and the existence of analytic normalization)?

What about normalization of Darboux-integrable systems, where first integrals can be multi-valued Darboux functions instead of rational functions. I have no idea at this moment. As we mentioned in the paper, the toric conservation principle does not apply to Darboux-type functions, so our toric approach does not work for them. 

This paper was inspired by a recent book by Xiang Zhang \cite{Zang_Book2017}
on the integrability and normalization of dynamical systems, which contains many results on Darboux integrability. I would like to thank Xiang Zhang for giving me a copy of this very nice book and for many useful discussions. I'm also very thankful to Tudor Ratiu and Kai Jiang for inviting me to Shanghai and Beijing many times, and for collaborating with me on the toric conservation principle
\cite{JiangRatiuZung_Normal2024}.

The main result of this paper was briefly announced (erroneously, because at that time I talked about normalization of Darboux-integrable systems) in my talks
in Nice and Toulouse in 2018. I would like to thank the participants of
these talks, in particular Emmanuel Paul, Jean-Pierre Ramis and Laurent Stolovitch, for interesting discussions on the subject. 
A first draft of this paper (which contained the embarrassing mistake about the toric conservation of Darboux integrals) was written  during my visit to the Center for Geometry and Physics, Institute for Basic Science, POSTECH, South Korea. I would like to thank  this Center, and its Director Yong-Geun Oh in particular, for the invitation, warm hospitality and excellent working conditions. 

Due to my non-academic activities as the founder and director of a software company called Torus Actions since 2019, I was not able to revise this first draft until very recently, when I was invited by Alexander Ivanov to submit a paper on the occasion of T.A. Fomenko's 80th birthday. I would like to thank Alexander and my Russian friends and colleagues, especially Alexey Bolsinov and Andrei Oshemkov, for this invitation and for many discussions on integrable systems during many years.

My first encounter with torus actions in dynamical systems happened in 1988-1989, 
when I became a third-year under-graduate student under the supervision of A.T. Fomenko. Anatoly Timoveevich made me read his books and articles on integrable Hamiltonian systems (e.g., \cite{Fomenko_Morse1986,Fomenko_Integrability1988, FomenkoZieschang_Integrable1990}), and gave me the following problem: show that an integrable system with 2 degrees of freedom near a nondegenerate hyperbolic singular level of rank 1 can be perturbed in such a way that it remains integrable but all the hyperbolic singularities become simple (i.e., each singular level contains only one circle of hyperbolic singular points). This result would be similar to a classical result in Morse theory, where a function can be perturbed so that each singular level set will contain only one singular point. I eventually solved this problem some months later by discovering a hidden torus action which preserves the system, and this result became my first research paper, published in 1990 \cite{Zung_Bott1990}. Since then, the tori became inseparable from me, and throughout my research career I wrote many papers about them. I feel very lucky to be a student of Atatoly Timofeevich and to follow his passion for mathematics in general and for integrable systems in particular, and it is a great pleasure for me to dedicate this paper to him on the occasion of his 80th birthday, with gratitude. 

\bibliographystyle{amsplain}

\end{document}